\newtheorem{theorem}{Theorem}[section]
\newtheorem{lemma}[theorem]{Lemma}
\newtheorem{exercise}[theorem]{Exercise}
\begin{document}

\title{Jamming as Information: a Geometric Approach}

\author{Tanya Khovanova \\
\textit{Department of Mathematics, MIT}}
\date{August 27, 2008}

\maketitle

\begin{abstract}
In this paper I discuss the kinds of information that can be extracted by our enemy if our jamming is too precise. I show geometric solutions for reconstructing linear routes given certain information about them, such as the shortest distance to a point or the times of entering and exiting a circle. 
\end{abstract}


\section{Introduction}

Suppose we want to fly a plane into enemy territory. To make our enemy less abstract, I would like to call them Zerg (see StarCraft games \cite{StarCraft}). Our enemy has many radar systems to protect their air space. To hide information about the trajectory and location of our plane, we need to jam. At the same time, the fact that we are jamming gives out information that something is going on. As soon as jamming starts, the adrenaline of a certain zergling radar operator goes up and he calls his boss or runs to hide.

How much information do we give out when we jam? It really depends on the jamming strategy. Suppose we jam Zerg radars everyday at 6AM, without trying to fly our planes into their territory. They may grow used to this jamming or may start thinking that there is a malfunction in their system. They may even try to assign this strange periodic noise to new cosmic activities. In any case after a year of daily jamming they will be less alert to our bombers coming in than if we only jam when we are flying into their territory.

Similarly, if we have a rule that we jam a radar system as soon as our plane approaches within 100 kilometers distance of that radar, and the Zerg manage to bribe one of our designers, then, as soon as we start jamming, they would know that our plane is exactly at the 100 km radius from the radar we jam.

How much more information do we give away if we follow some exact jamming rules? 

In this paper I discuss the information that can be deduced in certain simple situations, given our jamming strategy and some assumptions.

The results reported in this paper can not only be used in our war with the Zerg, but can also be applied to different kinds of sensor information. For example, we can be on the other side and try to deduce information when someone jams us. More broadly, we can apply the results of this paper to any kind of sensors that detect movement. For example, we can have sensors around a house that start beeping when a mouse approaches a sensor at a distance of 100 meters. In this case our results can be applied to predict the mouse's route. To make our predictions valid the mouse should obey our assumptions, which might not be extremely realistic, but a reasonably good start for a mathematical model.

\section{Assumptions}

It makes no difference to me as a mathematician if a plane is flying into the radar or a mouse is running towards the sensors. It doesn't even matter if the mouse is flying or the plane is running. The only thing I want is to avoid using words like ``plane'' and ``mouse'' as one can be easily confused with a two-dimensional geometrical object and the other with a computer gadget. From now on I would like to call my flying or crawling object a bomber.

I also do not much care how exactly radars and jamming work. If you are interested in learning more, you can check any radar book \cite{Barton}.

I also would like to reduce my problem to two dimensions. For this I assume that our bombers fly at a constant altitude and that the Zerg know that altitude. Hence, they can effectively project the bomber's trajectory to a horizontal plane. 

Here are all my assumptions:
\begin{enumerate}
\item We have only one bomber in our air force.
\item Our bomber is flying on a horizontal plane.
\item Our bomber flies a constant speed and does not change its direction.
\end{enumerate}

\section{Types of Information}

The Zerg can obtain different types of information from our jamming if we are not accurate and not trying consciously to prevent the Zerg from having extra information.

I would classify the information in this way.

\begin{description}
\item [Time of entering a circle of a fixed radius.] If we start jamming when our secret bomber is exactly at 100 kilometers from the radar and our enemy knows that this is our strategy, then as soon as we start jamming, the Zerg will know that our bomber is entering this circle. Similarly, if motion sensors can turn on when a movement is detected at a given radius, we get this type of information.
\item [Time of exiting a circle of a fixed radius.] If we stop jamming when our bomber exits the threat radius of the radar, the Zerg will know that the bomber exited. At the same time we might know the entering time, but not the exit time. For example, there are some light sensors which turn off after a fixed time. This is why I would prefer to have two separate assumptions for entering and exit times.
\item [Closest distance to a point.] The power with which we need to jam is dependant on many parameters including the size of the bomber and the distance from the bomber to the radar. Suppose we are dumb enough to jam with the power that is exactly the minimum amount we need. Suppose the Zerg know the size of our single bomber and the power formula we use. If so, they might be able to calculate the distance from our bomber to the radar at every moment. As we assume that we are dumb we might not be sophisticated enough to produce a jamming power that changes. Suppose we calculate the exact maximum power we need using the closest distance of the bomber to the radar. Then we jam with the constant power while our bomber is inside the threat circle. In this case the Zerg will be able to calculate the closest distance of the bomber to the radar.
\item [Speed.] If we fly at standard speed and the Zerg have good spies they might know our bomber's speed.
\end{description}

\section{Dimension Considerations}

Suppose we know the time of the bomber entering a given circle. How can we parametrize all possible trajectories of the bomber? We can assign one parameter to a point on a circle and one parameter to the direction of flying. We will need at least two more measurings to calculate the trajectory. To calculate the location we need to know the speed too, which gives us one more measurement that we are seeking.

\section{General Considerations}

\begin{lemma} 
Given that we know entry and exit times, knowledge of the closest distance and of the speed are equivalent.
\end{lemma}
\begin{proof} All chords in a circle that have a fixed distance to the center have the same length; hence knowledge of the closest distance is equivalent to knowledge of the chord's length. And knowledge of the chord's length given the time our bomber covers this distance is equivalent to knowledge of the bomber's speed.
\end{proof}

The next lemma is proved in a similar manner.
\begin{lemma} 
Given that we know the entry time, the closest distance and the speed, we can calculate the exit time. 
\end{lemma}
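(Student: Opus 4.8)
The plan is to set up coordinates centered at the given point (the radar) and translate the geometric data into a chord through the circle. The bomber travels along a straight line at constant speed, so its intersection with the fixed circle is a chord. I would let $d$ denote the known closest distance to the center and $R$ the fixed radius of the circle. The key observation is that the closest approach of a line to the center occurs at the foot of the perpendicular from the center to the line, and this foot is precisely the midpoint of the chord. Since the distance from the center to the chord is $d$, the half-length of the chord is $\sqrt{R^2 - d^2}$, so the full chord length is $2\sqrt{R^2 - d^2}$.

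With the chord length in hand, the argument mirrors the previous lemma almost verbatim, which is why the author says it is proved ``in a similar manner.'' Knowing the speed $v$, the time needed to traverse a segment of length $\ell$ is simply $\ell / v$. Therefore the transit time inside the circle is
\begin{equation}
\Delta t = \frac{2\sqrt{R^2 - d^2}}{v}.
\end{equation}
Adding this to the known entry time yields the exit time: if $t_{\text{in}}$ is the entry time, then the exit time is $t_{\text{out}} = t_{\text{in}} + \dfrac{2\sqrt{R^2 - d^2}}{v}$.

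The steps in order would be: first, identify the chord as the portion of the straight-line trajectory lying inside the circle; second, use the perpendicular-foot characterization of closest approach to express the chord length in terms of $d$ and $R$ via the Pythagorean theorem; third, divide the chord length by the speed to obtain the transit time; and fourth, add the transit time to the entry time. I do not anticipate a genuine obstacle here, since each step is elementary. The only point requiring a word of care is confirming that the foot of the perpendicular actually lies \emph{between} the two intersection points (i.e., that the closest approach is attained inside the circle rather than on its boundary), which holds exactly when $d < R$; this is guaranteed by the hypothesis that the bomber genuinely enters the circle, so the chord is nondegenerate and the formula is valid.
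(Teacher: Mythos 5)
Your proof is correct and follows exactly the route the paper intends: the paper leaves this lemma as an exercise, noting only that it is ``proved in a similar manner'' to the preceding one, and your argument --- chord length $2\sqrt{R^2-d^2}$ from the closest distance, transit time from the speed, added to the entry time --- is that similar manner made explicit. Your closing remark that $d < R$ is needed for a nondegenerate chord is a sensible extra precaution the paper does not spell out.
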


\begin{exercise} 
Prove the lemma.
\end{exercise}

Now I would like to discuss different cases.

\section{Case 1. Shortest distances to two radars}

\begin{theorem}
Given the shortest distances of the trajectory of the bomber to two different radars, we can calculate up to 4 potential trajectories of the bomber.
\end{theorem}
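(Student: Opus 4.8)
The plan is to translate "shortest distance to a radar" into a geometric tangency condition on the bomber's line. Since the bomber flies a straight line at constant speed, its trajectory is an infinite line $\ell$ (the route). The shortest distance from $\ell$ to radar $R_1$ being some known value $d_1$ means that $\ell$ is tangent to the circle $C_1$ centered at $R_1$ with radius $d_1$; similarly $\ell$ is tangent to the circle $C_2$ centered at $R_2$ with radius $d_2$. So the whole problem reduces to the classical question: how many lines are simultaneously tangent to two given circles?

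First I would set up the two circles $C_1$ and $C_2$ from the given data. Then the key step is the count of common tangent lines to two circles. In the generic case where the circles are disjoint and external to each other, there are exactly four common tangents: two \emph{external} (direct) tangents, which do not cross the segment $R_1R_2$, and two \emph{internal} (transverse) tangents, which cross between the centers. Each such common tangent is a candidate for the line $\ell$, giving up to four possible trajectories, which is precisely the bound claimed. I would justify the number four by the standard construction: the external tangents pass through the external center of similitude of the two circles, and the internal tangents through the internal center of similitude, and from each center of similitude one draws exactly two tangents to a circle. Alternatively, one can argue by degrees of freedom matching the dimension count of the previous section: a line in the plane has two parameters, and each tangency condition is one equation, so two conditions cut out a zero-dimensional (finite) solution set, and a more careful algebraic or geometric analysis pins the count at four.

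I would then address why the bound is "up to" four rather than exactly four: the number of common tangents depends on the relative position of the circles. If the circles intersect in two points there are only two common tangents; if they are internally tangent or one contains the other there may be one or none; boundary configurations (externally tangent circles, equal radii forcing parallel external tangents, or collinear special cases) reduce the count. Stating the result as "up to 4" cleanly absorbs all these degenerate cases, so I would simply remark that the generic configuration realizes the maximum of four and the degenerate ones give fewer.

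The main obstacle I anticipate is not the tangent count itself, which is classical, but being careful about the distinction between the \emph{line} of flight and the actual \emph{trajectory}. Knowing the closest distances determines the flight line up to these four choices, but strictly speaking a trajectory also carries direction, starting point, and speed, none of which is fixed by the two distance measurements alone; this is consistent with the dimension count in Section~4, which noted that two measurements are insufficient to recover position and speed. Hence the honest statement is that we recover up to four candidate \emph{lines}, and I would phrase the conclusion so that "potential trajectories" is understood in this line-of-flight sense, flagging that additional data (entry time, speed) would be needed to resolve the remaining freedom.
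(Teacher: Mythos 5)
Your proof is correct, and the reduction at its heart --- shortest distance $d_i$ to radar $R_i$ means the flight line is tangent to the circle of radius $d_i$ centered at $R_i$, so the problem is the classical one of common tangents to two circles --- is exactly the paper's. Where you diverge is in how the four tangents are obtained. The paper gives a ruler-and-compass \emph{construction}: translate the trajectory by the vector $CA$ so that the problem becomes drawing a tangent from the point $A$ itself to an auxiliary circle centered at $B$ of radius $BD - CA$ (for external tangents) or $BD + CA$ (for internal ones), then shifting the resulting line back; the count of four falls out as two tangents from a point for each of the two auxiliary circles. You instead \emph{count} the tangents via the two centers of similitude, with two tangents through each --- which is precisely the alternative the paper relegates to an exercise (``Find a different solution to building tangent lines using homothety''). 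Each approach buys something: the paper's translation argument is fully constructive and self-contained, while your similitude-center argument is shorter and connects to the classical theory; note, though, that your degrees-of-freedom remark only shows the solution set is finite, not that it has size four, so the similitude (or translation) argument is doing the real work --- and the external center of similitude degenerates to infinity when the radii are equal, a case your ``up to 4'' discussion should absorb explicitly (the external tangents are then the two parallels to $R_1R_2$). Your treatment of degenerate configurations and of the $AB$-reflection symmetry implicit in the four solutions goes slightly beyond the paper's proof (which defers the case analysis to an exercise), and your caveat distinguishing the flight \emph{line} from the timed \emph{trajectory} is a fair sharpening of what ``potential trajectories'' must mean here, consistent with the paper's own dimension count.
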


\begin{proof}
Let us denote by $A$ and $B$ locations of the first and the second radars correspondingly. The situation is symmetrical with respect to the line $AB$. If we build one solution, the $AB$-reflection of it is another solution. Let $C$ and $D$ be the locations of the bomber at the closest distances to the first and the second points respectively. Since all that is known about these points $C$ and $D$ is the distance of each from the respective radar, each point, $C$ or $D$, must lie on a circle centered around the corresponding radar. Since the bomber is flying in a straight line, its trajectory will follow a path tangent to these circles. That means, we need to build mutual tangent lines to two given circles (see Figure \ref{twocirclesTangentLines}).

\begin{figure}[htbp]
  \centering
  \includegraphics[scale=0.5]{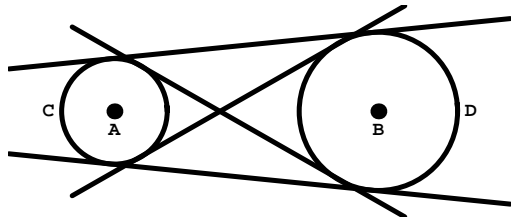}
  \caption{Case 1. Tangent lines for two circles.}\label{twocirclesTangentLines}
\end{figure}

Tangent lines might be internal and external. Let us build the external lines first. Let us translate the trajectory $CD$ with respect to vector $CA$. Point $C$ moves into point $A$ and point $D$ moves into a new point $E$ located on the line $DB$. The line $AE$ touches the circle of radius $BD - CA$ centered at $B$. Hence, to reconstruct $CD$ we need to build a circle of radius $BD - CA$ centered at $B$, then draw a tangent line from point $A$ to this new circle, then move the tangent line perpendicular to itself and away from $B$ at a distance $CA$. (See Figure \ref{twocirclesExternalTangent})

\begin{figure}[htbp]
  \centering
  \includegraphics[scale=0.5]{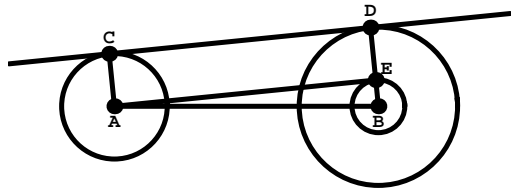}
  \caption{Case 1. Solving for an external tangent line.}\label{twocirclesExternalTangent}
\end{figure}

For internal mutual tangent lines the previous construction becomes the following: build a circle of radius $BD + CA$ centered at $B$, then draw a tangent line from point $A$ to this new circle, then move the tangent line perpendicular to itself and towards $B$ at a distance $CA$. (See Figure \ref{twocirclesInternalTangent})
\end{proof}

\begin{figure}[htbp]
  \centering
  \includegraphics[scale=0.5]{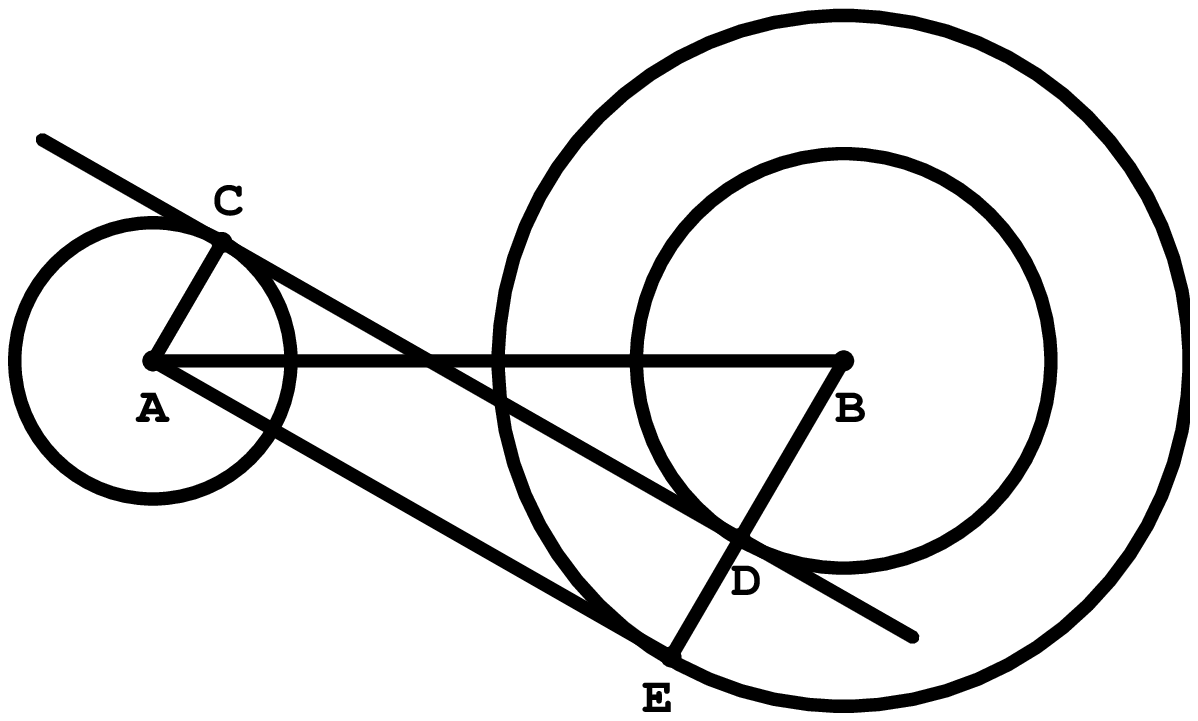}
  \caption{Case 1. Solving for an internal tangent line.}\label{twocirclesInternalTangent}
\end{figure}

\begin{exercise} 
Analyze special cases. When are there no external solutions? When are there no internal solutions? When is there only one external/internal solution? Can we have more than 4 solutions?
\end{exercise}

\begin{exercise} 
Find a different solution to building tangent lines using homothety, also called dilation, homothecy or similarity transformation (see Coxeter and Greitzer \cite{Coxeter} and Yalgom \cite{Yaglom}).
\end{exercise}

\textbf{Discussion.}
Thus, without being able to use a radar to garner even a single detection on a protected mission, and also without knowing any time, just using the knowledge of jamming power, it is possible to narrow the location of the protected bomber down to a finite, and small (four) number of potential routes. If we have a third radar in the area, we might be able to know the exact route. 

Conclusion: 
To completely confuse the Zerg we need to mask the real level of jamming power that we need. It is not enough to increase the power by a factor of 2. Because, if they have the information about 2, the Zerg can divide by 2. To hide the real number we need to use a function that is not invertible --- does not allow anyone to extract the jamming power we need from the power provided. This can be done by randomly adding some extra power or always jamming with the maximum possible power.

\section{Case 2. Shortest distance, entry and exit times to one radars and entry time to another radar}

\begin{theorem}
Given the shortest distance of the trajectory of the bomber to a radar and the time of entering and exiting a threat circle of the radar and the time of entering another threat circle, we can calculate up to 4 potential locations of the bomber at any given time.
\end{theorem}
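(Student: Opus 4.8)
The plan is to first turn the timing data into a speed, then to exploit a Pythagorean identity that collapses the second-radar condition into a circle–circle intersection, and finally to recover each trajectory as a tangent line.

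First I would extract the speed. Writing $r_A$ for the (known) threat radius of the first radar, centred at $A$, and $d$ for the given closest distance, the bomber crosses a chord of length $2\sqrt{r_A^2 - d^2}$ between the entry time $t_1$ and the exit time $t_2$; by the equivalence of closest distance and speed established in the first lemma above, the speed is therefore $v = 2\sqrt{r_A^2 - d^2}/(t_2 - t_1)$. By the symmetry of a chord about the perpendicular dropped from the centre, the bomber occupies its closest point $C$ to $A$ at the midpoint time $t_m = (t_1 + t_2)/2$, and this $C$ is exactly the foot of the perpendicular from $A$ to the trajectory, so $|AC| = d$.

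Next comes the key step. Let $t_3$ be the entry time into the second radar's circle, of radius $r_B$ and centre $B$, and set the signed travelled distance $\ell = v(t_3 - t_m)$. Let $Q$ denote the bomber's position at time $t_3$. Since $C$ is the perpendicular foot and $Q$ lies on the trajectory at distance $|\ell|$ from $C$, the right triangle $ACQ$ gives $|AQ| = \sqrt{d^2 + \ell^2} =: \rho$, a number we can compute from the data. But entering the second threat circle at time $t_3$ means $|BQ| = r_B$. Hence $Q$ is a common point of the circle of radius $\rho$ about $A$ and the circle of radius $r_B$ about $B$, and two circles meet in at most two points.

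Finally I would reconstruct the routes. For each of the (at most two) candidate points $Q$, the trajectory must be a line through $Q$ tangent to the circle of radius $d$ about $A$; the tangent length from $Q$ is $\sqrt{\rho^2 - d^2} = |\ell|$, consistent with the bookkeeping, and there are two such tangents. This yields at most $2 \times 2 = 4$ candidate lines, occurring in mirror-image pairs across $AB$ just as in Case~1. On each line the sign of $\ell$ fixes whether the bomber is heading toward or away from $C$, so direction and speed are both pinned down and the position at any prescribed time is determined, giving at most four potential locations. The main obstacle, and the only genuinely nonroutine move, is recognizing that $|AQ|$ is a known constant: without the observation that $C$ is the perpendicular foot, the second-radar condition reads as a messy equation in the tangent angle, whereas the Pythagorean identity reduces it to the intersection of two fixed circles. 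The remaining care is merely to confirm the tangent-length matching and to note that when the two circles fail to meet the count simply drops below four.
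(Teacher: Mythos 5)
Your proof is correct and takes essentially the same approach as the paper: both pin down the bomber's position $Q$ at the second entry time as an intersection of the second threat circle with a circle of computable radius centered at the first radar, and then recover up to two lines through each of the (at most two) candidate points, for four trajectories in all. Your Pythagorean computation of $\rho = \sqrt{d^2 + \ell^2}$ is an analytic rendering of the paper's synthetic rotational-symmetry construction of that radius, and your two tangents from $Q$ to the closest-distance circle are equivalent to the paper's reconstruction of the first entrance point by intersecting a circle of the travelled distance with the first threat circle.
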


\begin{proof}
For the first radar the closest distance gives us the length of the chord $L$ on which the vehicle travels inside the first radar's threat circle. The entry and exit times for the first threat circle give us the speed at which the bomber travels. The entrance time to the second threat circlt, given that we already know the speed, gives us the distance $d$ between the entrance point to the first radar's threat circle and the entrance point to the second radar's threat circle. Let us build any chord $AB$ of length $L$ in the first threat circle, then let us continue the line of the chord and find a point $C$ on the line which is in the direction from $A$ to $B$ and at a distance $d$ from $A$. As this picture is rotationally symmetric all the possible potential entrance points for the second radar lie on a circle around the center of the first radar. As we know one point, $C$, we know the circle. As a result, the second entrance point is the intersection of the circle we build with the threat circle of the second radar (see Figure \ref{case2}).

\begin{figure}[htbp]
  \centering
  \includegraphics[scale=0.5]{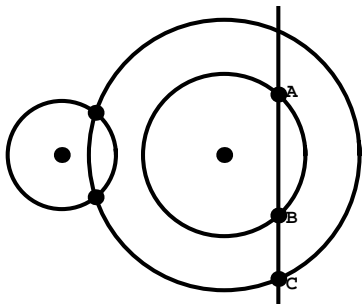}
  \caption{Case 2. Finding entrance points to the second radar's threat circle.}\label{case2}
\end{figure}

From the entrance point to the second circle we can reconstruct the entrance points to the first threat circle as intersections of the first circle with circle of radius $d$ centered at entrance points of the second circle.
\end{proof}

\begin{exercise} 
Show that we can have up to 4 solutions for the location of the bomber. Analyze in which situations we can have less than 4 solutions.
\end{exercise}

\textbf{Discussion.}
If we are providing the Zerg with the exact entrance and exit times and the shortest distance to the radius, they can shoot down our bomber as soon as we exit the first circle or enter the second, whichever comes later. They will need to waste 4 missiles for one bomber as they will shoot in four different directions, but one of these 4 missiles is guaranteed to destroy the bomber.

\section{Case 3. Jamming start time and end time for one radar. One location with time on the mission route}

Suppose the Zerg know nothing about jamming power or speed. Suppose they know the entrance and exit times for a threat radius of a given radar. Suppose for some reason (they intercepted satellite transmission, or a Sith Lord is on their side) the Zerg know the time and location of a point $A$ on the route of the bomber. 

\begin{theorem}
Given the entrance and exit times for a given circle and a location with time on the route of the bomber, we can calculate the complete route.
\end{theorem}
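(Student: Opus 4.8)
Place the radar at the point $O$ and let $\mathcal{C}$ be its threat circle, of radius $r$ centered at $O$. Let $P$ be the (unknown) entrance point, reached at the known time $t_{\mathrm{in}}$, and $Q$ the exit point, reached at $t_{\mathrm{out}}$; both $P$ and $Q$ lie on $\mathcal{C}$, and the given data point $A$ is reached at time $t_A$. The one idea that unlocks the problem is that constant speed converts \emph{ratios of times} into \emph{ratios of lengths} along the still-unknown straight trajectory. Since $P$, $A$, $Q$ are collinear and traversed at constant speed, the signed distances satisfy $\overline{AP}:\overline{AQ}=(t_{\mathrm{in}}-t_A):(t_{\mathrm{out}}-t_A)$, so $A$ splits the chord $PQ$ in a ratio determined entirely by the three times. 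Writing $\mu=(t_{\mathrm{in}}-t_A)/(t_{\mathrm{out}}-t_A)$, this says $\vec{AP}=\mu\,\vec{AQ}$.

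First I would pin down the speed as a sanity check, via the power of the point $A$ with respect to $\mathcal{C}$. For any line through $A$ meeting $\mathcal{C}$ at $P$ and $Q$, one has $\overline{AP}\cdot\overline{AQ}=|OA|^2-r^2$, independent of the line. Since $\overline{AP}=v(t_{\mathrm{in}}-t_A)$ and $\overline{AQ}=v(t_{\mathrm{out}}-t_A)$ for the unknown speed $v$, this gives
\[
v^2\,(t_{\mathrm{in}}-t_A)(t_{\mathrm{out}}-t_A)=|OA|^2-r^2 ,
\]
so $v$ is determined outright, and a real trajectory exists exactly when the two sides carry the same sign.

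The actual construction I would carry out through a homothety, as hinted for Case 1. The relation $\vec{AP}=\mu\,\vec{AQ}$ says precisely that the homothety $h$ centered at $A$ with ratio $\mu$ sends $Q$ to $P$. Now $Q\in\mathcal{C}$ forces $P=h(Q)\in h(\mathcal{C})$, while at the same time $P\in\mathcal{C}$; hence $P$ is one of the (at most two) intersection points of $\mathcal{C}$ with its homothetic image $h(\mathcal{C})$, the latter being a circle of radius $|\mu|r$ centered at $h(O)$. So I would construct $h(\mathcal{C})$, intersect it with $\mathcal{C}$ to locate $P$, recover $Q=h^{-1}(P)$, and thereby read off the trajectory line $PQ$, its direction (toward $Q$ as time increases), and the speed $|PQ|/(t_{\mathrm{out}}-t_{\mathrm{in}})$. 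With the position $P$ fixed at the known instant $t_{\mathrm{in}}$, the location of the bomber at every time is then determined. Because $h(O)$ lies on the line $OA$, the two intersection points are mirror images in $OA$, so the procedure returns at most two routes, symmetric across the line joining the radar to $A$.

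The main obstacle is conceptual rather than computational: recognizing that the constant-speed hypothesis lets one write the single known point $A$ as a \emph{fixed} affine combination of the two unknown circle points, which collapses the two ``lies on $\mathcal{C}$'' conditions into one circle--circle intersection via $h$. After that the work is routine, apart from bookkeeping the degenerate cases --- $t_A=t_{\mathrm{out}}$ (so $\mu$ is undefined) or $t_A=t_{\mathrm{in}}$, where $A$ coincides with an exit or entrance point and must be treated directly --- and counting solutions (zero, one, or two) according to whether $h(\mathcal{C})$ misses, touches, or crosses $\mathcal{C}$.
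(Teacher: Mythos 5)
Your proposal is correct and takes essentially the same approach as the paper: a homothety centered at $A$ with ratio determined by the time differences, intersecting the threat circle with its homothetic image to recover the entrance/exit chord (the paper uses ratio $y/x$ to obtain the exit point first, while your reciprocal ratio $\mu$ yields the entrance point first --- an equivalent construction). Your signed ratio and the power-of-the-point determination of the speed are pleasant extras; in particular the signed $\mu$ uniformly covers the case where $A$ lies inside the threat circle, which the paper leaves as an exercise.
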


\begin{proof}
Let us assume that point $A$ is outside of the threat circle -- or, equivalently, the time at the given location $A$ is outside the entrance-exit time interval. Suppose the time between our bomber being at point $A$ and entrance time is $x$ and the time between $A$ and exit time is $y$. Then we do a homothety of a threat circle with the center $A$ and the ratio $y/x$. You can read about homothety in more detail on the wiki \cite{Wiki}, Mathworld \cite{MW} and Coxeter \cite{Coxeter}. Essentially, our homothety operation is the following: We take the center $O$ of our threat circle and build a point $O_1$  on the line $AO$ such that $O_1A/OA = y/x$. Then we build a circle with the center $O_1$ and the radius $yr/x$, where $r$ is the radius of the threat circle. The point of intersection of the threat circle with the new circle we built is the exit point. (See Figure \ref{case3})

\begin{figure}[htbp]
  \centering
  \includegraphics[scale=0.5]{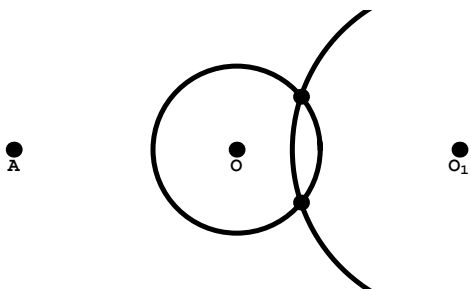}
  \caption{Case 3. Finding exit points.}\label{case3}
\end{figure}

Let us denote the exit point we built by $Y$ and the second intersection point of the line $AY$ with the threat circle as $X$. Then $X$ is the entrance point to the threat circle. And the complete route is the line $AXY$. (See Figure \ref{case3_2})

\begin{figure}[htbp]
  \centering
  \includegraphics[scale=0.5]{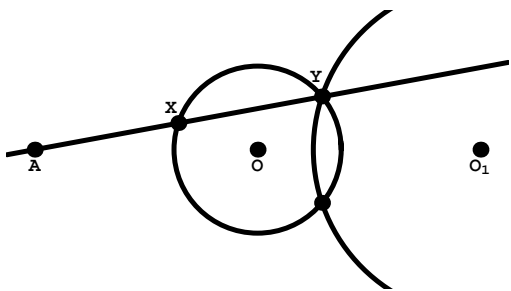}
  \caption{Case 3. Recovering the entrance point from the exit point.}\label{case3_2}
\end{figure}

\end{proof}

\begin{exercise} 
Show that for the points $Y$ and $X$ built in the proof above, $AY/AX = y/x$.
\end{exercise}

\begin{exercise} 
Finish the proof when $A$ is inside the threat circle.
\end{exercise}

\begin{exercise} 
How many solutions can we get? Provide the analysis.
\end{exercise}

\section{Case 4. Jamming start time and end time for two radars}

Suppose we know nothing about shortest distances to radars. 

\begin{theorem}
Given entrance and exit times for two circles we can calculate a potential trajectory.
\end{theorem}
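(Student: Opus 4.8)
The plan is to reduce Case 4 to Case 1 in two stages: first extract the speed $v$ from the timing data alone, and then build the line as a common tangent to two auxiliary circles. Write $O_1,O_2$ for the two radar centres, $r_1,r_2$ for their known radii, and $D=|O_1O_2|$ for the (known) distance between them. Let the entry and exit times be $a_1,b_1$ for the first circle and $a_2,b_2$ for the second. The unknown trajectory is a line $\ell$ together with a uniform parametrization $P(t)$, so the genuine unknowns are the direction of $\ell$, its offset, the speed $v$, and a time origin --- four degrees of freedom, matching the four given numbers, exactly as anticipated in the Dimension Considerations section.

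First I would convert durations into chord lengths. Because the motion is uniform, the bomber spends time $\Delta t_i=b_i-a_i$ inside circle $i$, so the chord cut on $\ell$ by that circle has length $L_i=v\,\Delta t_i$. The foot of the perpendicular from $O_i$ to $\ell$ bisects this chord, hence the distance from $O_i$ to $\ell$ is $d_i=\sqrt{r_i^2-\tfrac14 v^2\Delta t_i^2}$, a known function of the single unknown $v$. Next I would use the relative timing: the midpoint of chord $i$ is reached at time $\tfrac12(a_i+b_i)$, so the two midpoints $M_1,M_2$ --- which are precisely the feet of the perpendiculars from $O_1,O_2$ --- are separated along $\ell$ by $|M_1M_2|=v\,T$, where $T=\tfrac12\bigl|(a_1+b_1)-(a_2+b_2)\bigr|$ is known. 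Resolving the known vector $\vec{O_1O_2}$ into its tangential and normal parts relative to $\ell$ gives $|M_1M_2|$ for the tangential part and $d_1\mp d_2$ for the normal part, the sign recording whether the radars lie on the same side of $\ell$ (external tangent) or on opposite sides (internal tangent). Pythagoras then produces a single scalar equation in the one unknown $v$,
\[ D^2 = v^2T^2 + \Bigl(\sqrt{r_1^2-\tfrac14 v^2\Delta t_1^2}\ \pm\ \sqrt{r_2^2-\tfrac14 v^2\Delta t_2^2}\Bigr)^2 . \]
Isolating the surviving radical and squaring once clears both square roots and collapses the $\pm$ ambiguity into a single quadratic in $w=v^2$; its admissible roots (those with $w>0$ and with $d_1,d_2$ real) yield the candidate speeds.

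Once $v$ is pinned down, $d_1$ and $d_2$ are determined numbers, and recovering $\ell$ is exactly the problem solved in Case 1: the trajectory must be tangent to the circle of radius $d_1$ about $O_1$ and simultaneously to the circle of radius $d_2$ about $O_2$, so I would simply invoke the external/internal common-tangent construction established there. Finally, knowing that the bomber sits at the entry point of circle $i$ at time $a_i$ and travels at speed $v$ along $\ell$ fixes the direction of travel and the time origin, hence the full parametrization $P(t)$, which is the claimed trajectory.

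The \emph{main obstacle} is the sign and feasibility bookkeeping rather than any one computation. Squaring introduces extraneous roots; the unresolved same-side/opposite-side choice splits into the external and internal tangent families; and the reflection symmetry across the line $O_1O_2$ (already noted in Case 1) doubles each admissible line. For every surviving speed one must therefore check which tangent family and which direction of motion actually reproduce the prescribed order of entry and exit times, and discard any configuration with $r_i^2<\tfrac14 v^2\Delta t_i^2$ (no real chord) or with internally inconsistent times. This case analysis is precisely what governs how many of the formally constructed lines are genuine trajectories, and it is the part I would expect to be delicate.
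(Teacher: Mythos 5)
Your proof is correct, and in substance it follows the third of the three approaches the paper sketches: both hinge on the same Pythagorean relation $D^2 = s^2 + (d_1 \mp d_2)^2$ linking the distance between the centers, the separation $s$ of the two chord midpoints along the trajectory, and the two perpendicular distances, which collapses the problem to one scalar equation in one unknown. The differences are worth recording. The paper parametrizes by $p$, the distance from the first center to the midpoint of its chord, and works with the distance ratios $x = BC/AB$, $y = CD/AB$, which silently presumes the ordering $A, B, C, D$ along the line (disjoint chords); your parametrization by the speed $v$, together with the observation that the midpoint of chord $i$ is traversed at time $\tfrac12(a_i+b_i)$ so that $s = vT$, is algebraically equivalent (via $p = \sqrt{r_1^2 - \tfrac14 v^2 \Delta t_1^2}$) but handles overlapping entry--exit intervals and either direction of travel uniformly, with no case split on the ordering. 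You also go further than the paper in two respects: once $v$ is found, you reconstruct the line explicitly by reducing to the common-tangent construction of Case 1 applied to circles of radii $d_1, d_2$ about $O_1, O_2$, whereas the paper's third approach stops at the equation in $p$ and leaves the reconstruction implicit; and your closing discussion of extraneous roots introduced by squaring, the feasibility constraints $r_i^2 \ge \tfrac14 v^2 \Delta t_i^2$, and matching the tangent family and direction of motion against the prescribed time order supplies exactly the solution-count bookkeeping the paper relegates to an exercise. Since the theorem claims only \emph{a potential} trajectory and the data are assumed to come from an actual flight, at least one admissible root survives your screening, so the argument is complete.
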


\begin{proof}
Suppose we found intersection points $A$, $B$, $C$ and $D$ of the trajectory with our two threat circles (See Figure \ref{case4}). As we assume that the speed is constant we can derive the ratios $BC/AB = x$ and $CD/AB = y$ from the known times. 

\begin{figure}[htbp]
  \centering
  \includegraphics[scale=0.5]{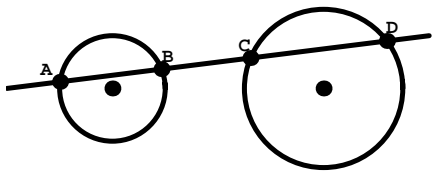}
  \caption{Case 4. Analysis.}\label{case4}
\end{figure}

This means we need to find the line such that its intersection by the two given circles divides the line into segments of given ratios. Let us solve this problem in coordinates. There are many ways to carry out the calculation.

First approach: We can parametrize any line with two parameters. Then we calculate the intersection points $A$, $B$, $C$ and $D$ as the function of these two parameters. The given ratios $AB/BC$ and $BC/CD$ give us two equations to solve to find the values of our two parameters.

Second approach: Let us parametrize points on the first circle with one real parameter. If we know point $A$ on the first circle we can find point $C$ on the second circle as the intersection of the second circle with the homothetic expansion of the first circle with center $A$ and ratio $x+1$. (See Figure \ref{case4_2}) Similarly, we can find point $D$ on the second circle as the intersection of the second circle with the homothety of the first circle with center $A$ and ratio $x+y+1$. Now our equation for our parameter is using the fact that these two different intersection points $C$ and $D$ should lie on the same line with $A$.

\begin{figure}[htbp]
  \centering
  \includegraphics[scale=0.5]{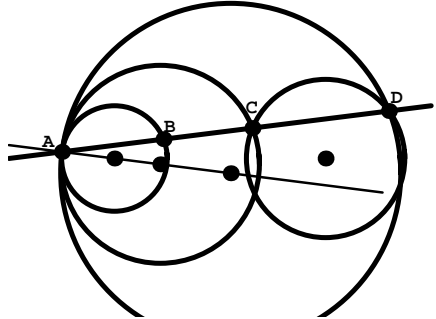}
  \caption{Case 4. The second approach.}\label{case4_2}
\end{figure}

Third approach: Let us denote the center of $AB$ as $P$ and the distance from the center of the first circle to $P$ as $p$. Then we can calculate the length of this chord $AB$ as $\sqrt{R_1^2 - p^2}$, where $R_1$ is the first radius. From the ratios we can now calculate the chord $CD$ and, correspondingly, the distance $q$ to it from the center of the second circle. Using the ratios we can also calculate the distance $s$ between the centers of the chords $AB$ and $CD$. Now we can express the distance between the centers of the circles in terms of $p$, $q$ and $s$. This gives us an equation on our initial parameter $p$. (See Figure \ref{case4_3})
\end{proof}

\begin{figure}[htbp]
  \centering
  \includegraphics[scale=0.5]{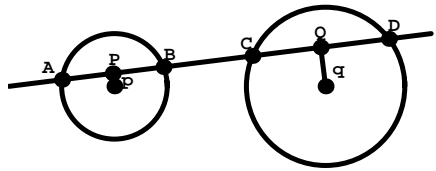}
  \caption{Case 4. The third approach.}\label{case4_3}
\end{figure}

\begin{exercise}
Which method of calculation do you like better? Can you suggest other methods to calculate the trajectory?
\end{exercise}

\section{Speed}

The Zerg can know our speed. It can happen if we always fly with the maximum speed or the same speed. The Zerg do not even need very good spies. They just need to see us flying once. 

If the speed is known and the entrance time is given, the shortest distance to the radar center knowledge is equivalent to exit time knowledge. For example, if the Zerg are using our jamming power to calculate the shortest distance to the radar they might know our exit time before we actually exit.

\begin{exercise}
Suppose the Zerg know the speed of our bomber. If their only other knowledge is the entrance times to the threat radii, for how many radars do they need to know entrance times to reduce the potential bomber trajectory to a finite number of lines?
\end{exercise}

\section{Relaxing Assumptions}

Let us try to relax assumptions and see what happens. Suppose that we increase our air fleet significantly, and now have two bombers. Suppose our jamming tactic is to calculate the jamming for each bomber separately and then use the super-set of the jamming power to deliver the actual power. For example, our first bomber enters the threat radius of the Zerg radar at time $t_1$, then at time $t_2$ our second bomber enters, then they exit the circle at times $t_3$ and $t_4$ correspondingly. 

If $t_3$ is later than $t_4$, then we jam starting at $t_1$ and ending at $t_3$. In this case our jamming correspond to the first bomber. Even if the Zerg do not have any information about the existence of the second bomber, they can use jamming information to find and kill the first bomber. Then the second bomber becomes the only bomber and the problem is reduced to the previous case.

If $t_3$ is earlier than $t_4$, then we jam starting at $t_1$ and ending at $t_4$. If the Zerg have outdated information and think we have only one bomber, their calculation becomes completely invalid, they get confused and we have a chance to win. If they know that we have two bombers, they might start considering different cases and combinations. It will take them longer to get our routes, but they have a chance.

Suppose now we allow our bomber to change speed without changing its direction. Some of the cases we calculated do not consider speed as in case 1. If we know the trajectory, then the constant speed identifies the location as soon as one point is given. If the speed varies and one point is given, then our potential location becomes a segment on the trajectory. The bigger the variation and the farther away from a given point in time the bigger the potential segment. For other cases, the potential trajectory can become a range of trajectories.

Suppose we allow the bomber to change direction. In the next section I would like to consider a bonus case where our bomber changes its direction, but not its speed, exactly once.

\section{Bonus Case}

Let us assume that our bomber starts at a given point in a known direction with a given speed. Then it changes the direction of flying without changing the absolute speed. Suppose the bomber enters the threat circle when it is on its second leg. Suppose the Zerg know the starting point and time and the starting direction. Suppose they also know the total distance from the starting point to the entrance point and the distance inside the threat circle.

\begin{theorem}
In this case the Zerg can recover the location of the bomber.
\end{theorem}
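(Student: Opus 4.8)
The plan is to strip away the two unknowns of the problem --- how far along the first leg the bomber turns, and in which direction it then flies --- and to show that the two given distances pin both of them down, leaving at most the obvious left/right ambiguity. Place the radar at $O$ with threat radius $r$, the start at $S$, and let $u$ be the known unit vector of the first leg, so the (unknown) turning point is $T = S + a\,u$ for a single unknown scalar $a = |ST| \ge 0$. Write $D$ for the known total distance from $S$ to the entrance point and $L$ for the known chord cut inside the circle. Everything will reduce to solving for the one number $a$.

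First I would replace the chord condition by a tangency condition. A chord of length $L$ in a circle of radius $r$ lies at distance $h = \sqrt{r^2 - (L/2)^2}$ from the center, and the foot of the perpendicular from $O$ to the chord is its midpoint $M$. Hence the second leg, extended, is exactly a line tangent to the auxiliary circle centered at $O$ of radius $h$, and it touches that circle at the chord's midpoint $M$. This is the geometric heart of the construction: the chord-length datum becomes a tangency datum, which we already know how to handle from the tangent-line constructions of Case 1.

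Next I would use the total-distance constraint. Since $T$ lies outside the threat circle, the tangent length $|TM|$ from $T$ to the auxiliary circle equals $\sqrt{|TO|^2 - h^2}$ (the right triangle $OMT$ has its right angle at $M$), and the entrance point sits a distance $L/2$ before $M$ along the leg, so $|TP_{\text{in}}| = \sqrt{|TO|^2 - h^2} - L/2$. The condition $|ST| + |TP_{\text{in}}| = D$ then reads
\begin{equation*}
D + \tfrac{L}{2} - a = \sqrt{|TO|^2 - h^2}, \qquad |TO|^2 = a^2 + 2\beta a + \gamma
\end{equation*}
where $\beta = u\cdot(S-O)$ and $\gamma = |S-O|^2$ are known. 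Squaring the first equation, the $a^2$ terms on the two sides cancel, leaving a \emph{linear} equation in $a$ with a unique root. With $a$, hence $T$, determined, I would draw the two tangents from $T$ to the auxiliary circle, mark on each the point a distance $L/2$ short of the tangent point as the entrance, and read off the full two-leg trajectory $S \to T \to$ circle; the known speed and start time then give the bomber's location at any instant.

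The main work is not the algebra (the cancellation is automatic) but the bookkeeping around it. I would have to confirm that the squaring introduces no spurious root by checking the sign condition $D + L/2 - a \ge 0$ together with $|TO| \ge h$ (so that a tangent exists), verify that the recovered turn distance is genuinely positive and that the second leg reaches the circle only after the turn, and interpret the two tangents as the expected mirror-image routes (turning toward versus away), which collapse to one when $T$ happens to lie on the line $SO$. These degenerate and multiplicity questions --- exactly how many of the at most two candidates survive --- are the only delicate part; the existence and essential uniqueness of $a$ fall straight out of the linear equation.
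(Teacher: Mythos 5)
Your proof is correct, and although it is written analytically rather than synthetically, it is at bottom the paper's argument in coordinate form. The paper (with its $A, B, d, CD$ playing the roles of your $S, T, D, L$) first solves the auxiliary straight-line problem: construct a circle of the threat radius whose center $O'$ lies at perpendicular distance $h=\sqrt{r^2-(L/2)^2}$ from the flight line, with foot of perpendicular at distance $D+L/2$ from $S$; it then observes that rotating everything past the turn point about $B$ carries $O'$ to the true center $O$, so $B$ is the intersection of the known ray with the perpendicular bisector of $OO'$. Your linear equation is exactly that bisector: since $h^2+(D+L/2-a)^2=|TO'|^2$, your squared constraint says precisely $|TO|=|TO'|$, and the cancellation of the $a^2$ terms is the algebraic shadow of the fact that an equidistance locus is a straight line. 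Your version buys a clean, rotation-free derivation --- the reformulation of the chord datum as tangency to the radius-$h$ circle about $O$ makes the constraint directly computable, and it yields the tidy consistency check $|TM|\ge L/2 \Leftrightarrow T$ outside the threat circle. The paper's version buys a ruler-and-compass construction and makes the solution count transparent: the two mirror-image choices of $O'$ are equidistant from every point of the flight line, hence determine the \emph{same} turn point (matching your unique $a$), while the two rotations correspond to your two tangents from $T$. One small slip in your closing remarks: those two tangents are mirror images across the line $TO$ and never literally coalesce (that would force $|TM|=0<L/2$); they are exchanged by a symmetry of the data only when $O$ lies on the first-leg line, so generically the Zerg are left with two genuinely distinct candidate routes --- the multiplicity the paper relegates to its final exercise --- rather than a single location, contrary to what your ``collapse when $T$ lies on $SO$'' comment suggests.
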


\begin{proof}
Here is what the Zerg know (see Figure \ref{bonusCase}): They know the location and time of point $A$ and the threat circle. They also know the direction from $A$ to $B$ and the distance $CD$. Also they know the sum $AB + BC = d$. The task is to actually find the points $B$, $C$ and $D$.

\begin{figure}[htbp]
  \centering
  \includegraphics[scale=0.5]{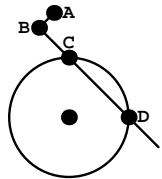}
  \caption{Bonus Case. The setting.}\label{bonusCase}
\end{figure}

Now let us solve another problem. Suppose the bomber starts from point $A$ in the direction to $B$ at a given time with a given speed. Where should the threat circle of the same radius as our circle be located so that the entrance point is at a distance $d$ from $A$ and the trajectory inside the circle is equal to $CD$? (See Figure \ref{bonusCase2}).

\begin{figure}[htbp]
  \centering
  \includegraphics[scale=0.5]{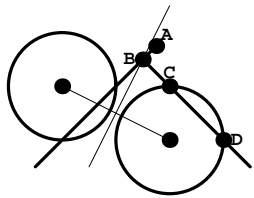}
  \caption{Bonus Case. Solution.}\label{bonusCase2}
\end{figure}

\begin{exercise}
Construct such a circle.
\end{exercise}

Now, it is easy to prove that the solution to our initial problem is the rotation of the trajectory past point $B$ around point $B$. This means the point $B$ belongs to the perpendicular bisector of the segment connecting the radar and the center of the just constructed circle.
\end{proof}

\begin{exercise}
How many different answers can there be to this problem?
\end{exercise}

\section{Acknowledgements}

I am grateful to Jane Sherwin and Sue Katz for editing this paper. I am also thankful to the Art of Problem Solving for inviting me to give an online lecture on geometric transformations at their WOOT program. This lecture served as the encouragement for finishing this paper. I also would like to thank Alexey Radul for explaining to me the ways of the Zerg.

\end{document}